  \theoremstyle{plain}
  \newtheorem{thm}{Theorem}[section]
  \newtheorem{cor}[thm]{Corollary}
  \theoremstyle{definition}
  \newtheorem{defn}[thm]{Definition}
  \newtheorem{example}[thm]{Example}
  \numberwithin{equation}{section}
  \newcommand{\interior}[1]{{\kern0pt#1}^{\mathrm{o}}}
\begin{document}
  	
  	\title{Lebesgue Number and Total Boundedness}
  	\author[A. Gupta]{Ajit Kumar Gupta$^{\dagger *}$}
  	\address{$^\dagger$Department of Mathematics\\ National Institute of Technology Meghalaya\\ Shillong 793003\\ India}
  	\email{ajitkumar.gupta@nitm.ac.in}
  	\author[S. Mukherjee]{Saikat Mukherjee{$^{\dagger}$}}
  	\address{$^\dagger$Department of Mathematics\\ National Institute of Technology Meghalaya\\ Shillong 793003\\ India}
  	\email{saikat.mukherjee@nitm.ac.in}
  	$\thanks{*Corresponding author}$
  	\subjclass[2020]{54E50, 54D05}
  	
  	\keywords{Locally finite open cover, Atsuji space, chainability.}

  	\begin{abstract}
  		A generalization of the Lebesgue number lemma is obtained. It is proved that, if each countably infinite locally finite open cover of a chainable metric space $X$ has a Lebesgue number, then $X$ is totally bounded. A property of metric spaces which is a generalization of connectedness and Menger convexity is introduced. It is observed that Atsujiness and compactness are equivalent for a metric space with this introduced property as well as for a chainable metric space.
  	\end{abstract}

  	\maketitle
  	
  	\section{Introduction}\label{intro}
  	The Lebesgue number lemma (\cite{jr11}, p. 175) says, if $X$ is a compact metric space, then each open cover of $X$ has a Lebesgue number.
	  A metric space $X$ is said to be \textit{Atsuji space} if each open cover of $X$ has a Lebesgue number. Atsuji spaces are weaker than compact metric spaces and stronger than complete metric spaces. In this manuscript, we obtain a generalization of the Lebesgue number lemma. 
%
%
	 Using this generalization, we immediately conclude that each locally finite open cover of a totally bounded subset $S$ in a complete metric space has a finite subcover for $S$. Further, we find a sufficient condition with which a chainable metric space is totally bounded, and as a corollary to it, we observe that Atsujiness and compactness are equivalent for a chainable metric space. A property of a metric space which is weaker than connectedness is introduced, and it is found that Atsujiness and compactness are equivalent for a metric space having this introduced property.
 
%
%
%

  	\section{Preliminaries}\label{prelim}
  	
  	For a subset $A$ in a metric space $X$, we denote the diameter of $A$ by diam$(A)$ and the complement of $A$ in $X$ by $X\setminus A$. An open ball centered at $a$ with radius $\epsilon$ is denoted by $B(a,\epsilon)$. 
  	
  	Let $(X,d)$ be a metric space. For a given $\epsilon>0$, an \textit{$\epsilon$-chain of length $n$} between two points $x,y\in X$ is a finite sequence $a_0,a_1,...,a_n$ in $X$ such that $a_0=x, a_n=y$ and $d(a_{i-1},a_i)\leq \epsilon$ for all $i=1,2,...,n$.
  A metric space $X$ is said to be \textit{finitely chainable} if for each $\epsilon>0$, there are finitely many points $p_1,p_2,...,p_j \in X$ and $m\in \mathbb N$ such that each point of $X$ can be joined with some $p_i$, $1\leq i \leq j$, by an $\epsilon$-chain of length $m$.  A subset $S$ in a metric space $X$ is said to be \textit{finitely chainable} if for each $\epsilon>0$ there exist finitely many points $p_1,p_2,...,p_j$ in $X$ and $m\in \mathbb N$ such that each point of $S$ can be joined with some $p_i$, $1\leq i\leq j$, by an $\epsilon$-chain of length $m$. By a finitely chainable subspace $S$ in a metric space $(X,d)$, we mean the metric space $(S,d)$ is finitely chainable. A totally bounded subset in a metric space $X$ is a finitely chainable subspace in $X$.

  	 A metric space is said to have an \textit{Atsuji completion} if its completion is an Atsuji space.
  	  	 
   	A topological space $X$ is said to be \textit{paracompact} if for each open cover of $X$ there is a locally finite open refinement that covers $X$.

%
  	 


  	\section{Lebesgue Number and Total Boundedness}\label{Lbsg No.-Ttly Bndd}	

Given an open cover $\mathcal O$ of a subset $S$ in a metric space $X$, if there is a number $\alpha >0$ such that each $A\subset S$ with diam$(A)<\alpha$ is contained in some $O\in \mathcal O$, then the number $\alpha$ is said to be a \textit{Lebesgue number} for the cover $\mathcal O$. 
 
Since, for a metric space $X$, each open cover of $X$ has a Lebesgue number if and only if each locally finite open cover of $X$ has a Lebesgue number, so the Lebesgue number lemma (\cite{jr11}, p. 175) can be equivalently restated as: Let $X$ be a complete metric space. If $X$ is totally bounded, then each locally finite open cover of $X$ has a Lebesgue number. However, each locally finite open cover of a totally bounded subset in a complete metric space need not have a Lebesgue number, for example, the totally bounded subset $(0,1)\cup(1,2)$ in $\mathbb R$. The following result provides a generalization of the Lebesgue number lemma.

\begin{thm}\label{TtlyBnddLclyFntCvr}
	Let $X$ be a complete metric space. If $S\subset X$ is totally bounded, then, for each locally finite open cover $\mathcal O$ of $S$ there is a number $\alpha >0$ such that each subset $B\subset S$ with diam$(B)<\alpha$ has a finite subcover from $\mathcal O$. 
\end{thm}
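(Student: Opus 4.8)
The plan is to carry out the proof of the classical Lebesgue number lemma, relativised to $S$ and its closure. Two facts drive it. First, since $X$ is complete and $S$ is totally bounded, $\overline{S}$ is compact: $\overline{S}$ is totally bounded (the closure of a totally bounded set is totally bounded) and closed in the complete space $X$, hence complete, and a complete totally bounded metric space is compact. Second, the hypothesis on $\mathcal O$ will enter only through the observation that every point of $\overline{S}$ has an open ball meeting only finitely many members of $\mathcal O$.

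First I would build a local version of the conclusion. For each $p\in\overline{S}$ choose $r_p>0$ so that $B(p,r_p)$ meets only finitely many members of $\mathcal O$, say exactly $O^p_1,\dots,O^p_{m_p}$ (possibly $m_p=0$). Because $\mathcal O$ covers $S$, every point of $B(p,r_p)\cap S$ lies in some member of $\mathcal O$, and that member meets $B(p,r_p)$, so it is one of the $O^p_j$; hence $B(p,r_p)\cap S\subseteq O^p_1\cup\dots\cup O^p_{m_p}$, a finite subfamily of $\mathcal O$.

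Then I would pass to a global choice of $\alpha$ by compactness. The family $\{B(p,r_p/2):p\in\overline{S}\}$ is an open cover of the compact set $\overline{S}$, so there are $p_1,\dots,p_N$ with $\overline{S}\subseteq\bigcup_{i=1}^N B(p_i,r_{p_i}/2)$; put $\alpha=\tfrac12\min_{1\le i\le N}r_{p_i}>0$. Now take any $B\subseteq S$ with $\mathrm{diam}(B)<\alpha$; the empty set is covered by the empty subfamily, so assume $B\neq\emptyset$ and fix $b\in B$. Since $b\in\overline{S}$ we have $b\in B(p_i,r_{p_i}/2)$ for some $i$, and for every $y\in B$ the triangle inequality gives $d(y,p_i)\le d(y,b)+d(b,p_i)<\mathrm{diam}(B)+r_{p_i}/2<r_{p_i}$. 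Hence $B\subseteq B(p_i,r_{p_i})\cap S\subseteq O^{p_i}_1\cup\dots\cup O^{p_i}_{m_{p_i}}$, a finite subcover of $B$ from $\mathcal O$, so this $\alpha$ works.

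I do not anticipate a genuine obstacle inside this argument; the one point that needs care is the meaning of ``locally finite open cover of $S$''. The proof uses, and the statement needs, that local finiteness holds at every point of $X$ (equivalently at every point of $\overline{S}$), not merely at points of $S$: were only the latter assumed, the cover $\{(\tfrac1{n+1},\tfrac1{n-1}):n\ge 2\}$ of $(0,1)\subset\mathbb R$ would be a locally finite (in $(0,1)$) open cover for which every $B=(0,\delta)$ fails to have a finite subcover. Once that convention is pinned down, the rest is routine.
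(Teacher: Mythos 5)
Your proof is correct, but it follows a different route than the paper's. The paper argues by contradiction in the sequential style: assuming no such $\alpha$ exists, it picks sets $C_n\subset S$ with $\mathrm{diam}(C_n)<1/n$ admitting no finite subcover, chooses $x_n\in C_n$, uses total boundedness to extract a Cauchy subsequence and completeness to get a limit $a\in X$, and then applies local finiteness at $a$ to show that eventually $C_{n_m}$ sits inside a ball meeting only finitely many members of $\mathcal O$ --- a contradiction. You instead give a direct, covering-style argument: $\overline{S}$ is compact, local finiteness furnishes for each $p\in\overline{S}$ a ball $B(p,r_p)$ meeting only finitely many members of $\mathcal O$ (so $B(p,r_p)\cap S$ has a finite subcover), and compactness of $\overline{S}$ applied to the half-radius balls yields an explicit $\alpha=\tfrac12\min_i r_{p_i}$. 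Both proofs hinge on the same key point --- that local finiteness is available at limit points of $S$, not only at points of $S$ --- which the paper uses when it invokes local finiteness at $a\in S'$; your closing remark on the convention, with the cover $\{(\tfrac1{n+1},\tfrac1{n-1}):n\ge2\}$ of $(0,1)$, correctly identifies that the statement would fail under the weaker reading, and is consistent with how the paper's proof (and its Example with $X=(0,2)$, $S=\{1/n\}$) treats local finiteness as a property relative to $X$. Your version buys an explicit Lebesgue-type constant and avoids the case split in the paper's proof (the paper separately handles the possibility that the subsequence is eventually constant at a point of $S$), at the mild cost of introducing $\overline{S}$ and the Heine--Borel machinery rather than working only with sequences in $S$.
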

\begin{proof}
	 For a locally finite open cover $\mathcal O$ of a totally bounded subset $S\subset X$, consider the open cover $\mathcal A$ of $S$ consisting of all finite unions of elements of $\mathcal O$. We prove, there is a Lebesgue number $\alpha>0$ for the cover $\mathcal A$. On the contrary, we assume, there is no Lebesgue number for the cover $\mathcal A$. Then, for each $n$, there is $C_n \subset S$ with diam$(C_n)<1/n$ such that $C_n\not\subset A$ for all $A\in \mathcal A$. Consider an element $x_n\in C_n$. Since $S$ is totally bounded, the sequence $\{x_n\}$ has a Cauchy subsequence, say $\{x_{n_i}\}_{i=1}^\infty$, converging to some $a\in X$. If $a\in S'$, and since $\mathcal O$ is locally finite, there is an $\epsilon>0$ such that the ball $B(a,\epsilon)$ will intersect at most finitely many members of ${\mathcal O}$. For $\epsilon/2$, there is $i$ such that diam$(C_{n_i})<\epsilon/2$, which implies $C_{n_i}\subset B(x_{n_i},\epsilon/2)$, and for $\epsilon/2$, there is $j$ such that $x_{n_j}\in B(a,\epsilon/2)$. Taking $m=\max\{i,j\}$, we have $C_{n_m}\subset B(a,\epsilon)$, and since $C_{n_m}\subset S$, so $C_{n_m}$ is contained in a union of some finite number of elements of $\mathcal O$. On the other side, if $x_{n_i}=a$ for all $i$, then $a$ is in some $O\in \mathcal O$, which implies $B(a,\epsilon)\subset O$ for some $\epsilon>0$. For the $\epsilon>0$, there is $m$ such that diam$(C_{n_m})<1/{n_m}<\epsilon$, this implies $C_{n_m}\subset B(x_{n_m},\epsilon)=B(a,\epsilon)\subset O$. Thus, in either case, we reach a contradiction to the assumption.
\end{proof}


\begin{cor}
	Let $X$ be a complete, finitely chainable metric space. If a subset $S$ in $X$ has an Atsuji completion, then, for each locally finite open cover $\mathcal O$ of $S$ there is an $\alpha>0$ such that each subset $B\subset S$ with diam$(B)<\alpha$ has a finite subcover from $\mathcal O$.
\end{cor}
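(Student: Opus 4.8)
The plan is to prove this directly from the structure of Atsuji spaces, rather than by placing $S$ itself under Theorem~\ref{TtlyBnddLclyFntCvr}: the hypotheses do not force $S$ to be totally bounded (for instance $S$ could be an infinite uniformly discrete subset of $X$, whose completion $\overline S=S$ is Atsuji while $S$ is not totally bounded), so that theorem cannot be applied to $S$. Write $\widetilde S$ for the completion of $S$. Since $X$ is complete, $\widetilde S$ is the closure $\overline S$ of $S$ in $X$, and it is Atsuji by hypothesis. Put $C:=(\overline S)'$, the set of accumulation points of $\overline S$. By the well-known characterization of Atsuji metric spaces, $C$ is compact and for each $\epsilon>0$ there is $\delta(\epsilon)>0$ such that $d(y,\overline S\setminus\{y\})\ge \delta(\epsilon)$ whenever $y\in\overline S$ satisfies $d(y,C)\ge\epsilon$.

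Fix a locally finite open cover $\mathcal O$ of $S$. First I use local finiteness near the compact set $C$: for each $a\in C$ pick $\rho(a)>0$ with $B(a,\rho(a))$ meeting only finitely many members of $\mathcal O$, choose $a_1,\dots,a_L\in C$ with $C\subset\bigcup_{l=1}^{L}B(a_l,\rho(a_l)/4)$, and set $\rho_0:=\tfrac14\min_{l}\rho(a_l)>0$. Next I feed the scale $\rho_0$ into the Atsuji estimate, obtaining $\delta:=\delta(\rho_0)>0$. I claim $\alpha:=\min\{\rho_0,\delta\}$ works.

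To see this, let $B\subset S$ with $\operatorname{diam}(B)<\alpha$. Case 1: some $b_0\in B$ has $d(b_0,C)<\rho_0$. Choose $a\in C$ with $d(b_0,a)<\rho_0$ and $l$ with $d(a,a_l)<\rho(a_l)/4$; since $\operatorname{diam}(B)<\alpha\le\rho_0\le\rho(a_l)/4$, the triangle inequality gives $B\subset B(a_l,\rho(a_l))$. Hence every member of $\mathcal O$ meeting $B$ is among the finitely many that meet $B(a_l,\rho(a_l))$, and since $\mathcal O$ covers $B$ these finitely many members already cover $B$. Case 2: $d(b,C)\ge\rho_0$ for all $b\in B$. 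Then $d(b,\overline S\setminus\{b\})\ge\delta$ for every $b\in B$, so any two distinct points of $B$ are at distance $\ge\delta>\operatorname{diam}(B)$; thus $B$ is empty or a single point of $S$ and trivially has a finite subcover. This proves the claim, and hence the corollary.

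The step I expect to be the crux is Case 1. Since $\mathcal O$ is only assumed to cover $S$, it need not cover $C=(\overline S)'$, so there is no compact set on which a Lebesgue number (or Theorem~\ref{TtlyBnddLclyFntCvr}) could be invoked; the finiteness must instead be extracted from local finiteness of $\mathcal O$ at the points of $C$ and then made uniform using compactness of $C$. Completeness of $X$ enters only to identify $\widetilde S$ with $\overline S$, so that the Atsuji structure theorem applies to $\overline S$; finite chainability of $X$, though assumed, does not appear to be needed for this particular conclusion.
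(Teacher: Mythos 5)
Your proof is correct, and it takes a genuinely different route from the paper's. The paper splits on whether the subspace $(S,d)$ is finitely chainable: if it is, the Atsuji-completion hypothesis together with Theorem 3.5 of \cite{sk17} yields total boundedness of $S$ and Theorem \ref{TtlyBnddLclyFntCvr} applies; if it is not, the paper reruns the contradiction argument from the proof of Theorem \ref{TtlyBnddLclyFntCvr}, using finite chainability of $X$ (Proposition 3.3 of \cite{sk17}) to force $I(x_n)\to 0$ along the extracted sequence and the Atsuji-completion characterization (Theorem 3.8 of \cite{tj07}) to produce a convergent subsequence. You instead apply the structure theorem for Atsuji spaces directly to $\overline S$ (identified with the completion via completeness of $X$): the derived set $C$ is compact and points of $\overline S$ far from $C$ are uniformly isolated, so a set of small diameter either sits inside one of finitely many balls centered in $C$ on which local finiteness gives only finitely many relevant members of $\mathcal O$, or is a singleton. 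Your opening observation that the hypotheses do not make $S$ totally bounded (so Theorem \ref{TtlyBnddLclyFntCvr} cannot simply be applied to $S$) is exactly why the paper needs its second case. What your route buys: it is self-contained modulo one standard characterization of Atsuji spaces (available in \cite{tj07}), it does not reuse the proof of Theorem \ref{TtlyBnddLclyFntCvr}, and, as you note, it never uses finite chainability of $X$, so it proves the stronger statement with that hypothesis deleted; the paper's route, by contrast, stays within its own machinery and the quoted results of \cite{sk17} and \cite{tj07}. Two small finishing touches: handle the degenerate case $C=\emptyset$ separately (there your $\rho_0$ is a minimum over an empty family; in that case $\overline S$ is uniformly discrete and only your Case 2 occurs), and note that you use local finiteness of $\mathcal O$ at points of $C=S'$, which may lie outside $S$ --- the same convention the paper itself uses in the proof of Theorem \ref{TtlyBnddLclyFntCvr}, so this is consistent with the statement as intended.
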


\begin{proof}
	Clearly $S$ is a finitely chainable subset. If $S$ is finitely chainable subspace, then by Theorem 3.5 in \cite{sk17}, it is totally bounded. Otherwise we proceed as follows. We use the notations used in the proof of above theorem. We assume, if possible, there is no Lebesgue number for the cover $\mathcal A$ of $S$. We get a sequence $\{x_n\}\subset S$ with $x_n\in C_n$. If the set $A=\{x_n:n\in \mathbb N\}$ is infinite, with no loss of generality we assume all $x_n$'s are distinct, then by Proposition 3.3 in \cite{sk17} we have $I(x_{n})$ converges to $0$, where $I(x_{n})=d(x_{n},X\setminus \{x_{n}\})$. Then by Theorem 3.8 in \cite{tj07}, there is a subsequence $\{x_{n_i}\}_{i=1}^\infty$ convergent to a point in $X$. Hence, if $A$ is infinite or finite, a contradiction to our assumption arises.
\end{proof}

Theorem \ref{TtlyBnddLclyFntCvr} may not hold if $X$ is not complete. For instance,
\begin{example}
	Consider the subspace $X=(0,2)\subset \mathbb R$, and a totally bounded subset $S=\{1/n:n\in \mathbb N\}$ in $X$. Let $r_n= 1/n-1/(n+1)$. Then, $\mathcal O=\{B(1/n,r_n):n\in \mathbb N\}$ is a locally finite open cover of $S$, and for each $\epsilon>0$ the subset $\{1/n:1/n<\epsilon\}\subset S$ has no finite subcover from $\mathcal O$.
\end{example}

The converse of Theorem \ref{TtlyBnddLclyFntCvr} is not true. For example,

\begin{example}
	Consider the Banach space $X=(l_1,\|\cdot\|_1)$, and a subset $S=\{e_m/n:m,n\in \mathbb N\}\cup \{(0,0,...)\}$ of it, where $e_m$ denotes the element having the $m$th element $1$ and all others $0$. Since $S$ is an Atsuji subset, so each covering of $S$ by sets open in $X$ has a Lebesgue number. But, the set $S$ is not totally bounded.
\end{example}

For a metric space $X$, the following are equivalent (Theorem 3.5, \cite{sk17}): 
\begin{enumerate}
	\item $X$ is totally bounded;
	\item Each locally finite open cover of $X$ with a Lebesgue number has a finite subcover.
\end{enumerate}
Being more general, for a subset $S$ in a metric space $X$ the following are equivalent too:
\begin{enumerate}
	\item $S$ is totally bounded;
	\item Each locally finite open cover of $S$ with a Lebesgue number has a finite subcover for $S$.
\end{enumerate}
So, in general, those locally finite open covers of a totally bounded subset which have the Lebesgue numbers have finite subcovers. A question arises: Can those locally finite open covers of a totally bounded subset which do not have any Lebesgue numbers have finite subcovers?

\begin{thm}\label{LclFntCvr-of-TtlBndd}
	Let $S$ be a subset in a complete metric space $X$. Then, $S$ is totally bounded if and only if each locally finite open cover of $S$ has a finite subcover for $S$.
\end{thm}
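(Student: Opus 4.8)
The plan is to prove the two implications separately. For the forward implication, suppose $S$ is totally bounded and let $\mathcal O$ be a locally finite open cover of $S$. Theorem~\ref{TtlyBnddLclyFntCvr} supplies an $\alpha>0$ such that every $B\subset S$ with $\mathrm{diam}(B)<\alpha$ has a finite subcover from $\mathcal O$. Using total boundedness I would cover $S$ by finitely many balls $B(y_1,\alpha/3),\dots,B(y_k,\alpha/3)$, so that each $B_i:=S\cap B(y_i,\alpha/3)$ satisfies $\mathrm{diam}(B_i)\le 2\alpha/3<\alpha$ and hence has a finite subcover $\mathcal F_i\subseteq\mathcal O$; then $\mathcal F_1\cup\dots\cup\mathcal F_k$ is a finite subfamily of $\mathcal O$ covering $S=B_1\cup\dots\cup B_k$.

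For the backward implication I would argue by contraposition: assume $S$ is not totally bounded and construct a locally finite open cover of $S$ with no finite subcover for $S$. Non-total-boundedness yields an $\epsilon>0$ such that $S$ is not covered by finitely many $\epsilon$-balls, and picking points of $S$ greedily produces an infinite set $A=\{x_n:n\in\mathbb N\}\subseteq S$ with $d(x_n,x_m)\ge\epsilon$ for all $n\ne m$. Any ball of radius $\epsilon/2$ contains at most one $x_n$, so $A$ has no limit point in $X$; hence $A$ is closed in $X$ and $X\setminus A$ is open. I would then take $\mathcal O=\{B(x_n,\epsilon/4):n\in\mathbb N\}\cup\{X\setminus A\}$, which is an open cover of $S$ since $A\cup(X\setminus A)=X\supseteq S$. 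It is locally finite: by the iterated triangle inequality, if $B(p,\epsilon/4)$ met both $B(x_n,\epsilon/4)$ and $B(x_m,\epsilon/4)$ with $n\ne m$, then $d(x_n,x_m)<\epsilon$, a contradiction; so $B(p,\epsilon/4)$ meets at most one of the balls, together with possibly $X\setminus A$. Finally $\mathcal O$ has no finite subcover for $S$: any finite subfamily uses only finitely many of the balls $B(x_n,\epsilon/4)$, and since each $x_m$ lies outside $X\setminus A$ and outside $B(x_n,\epsilon/4)$ for $m\ne n$, infinitely many points of $S$ would remain uncovered.

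I expect the only delicate point to be local finiteness in the backward direction, specifically the choice of radius $\epsilon/4$ rather than $\epsilon/2$, which is exactly what makes the four-fold triangle inequality rule out a small ball meeting two of the $B(x_n,\epsilon/4)$; the rest is routine bookkeeping, and one checks that it goes through verbatim whether ``open cover of $S$'' is read as a cover by sets open in $X$ or by sets open in $S$. It is worth remarking that completeness of $X$ enters only through Theorem~\ref{TtlyBnddLclyFntCvr} in the forward implication, the backward construction being valid in an arbitrary metric space, and that completeness cannot be dropped: for the totally bounded set $\{1/n:n\in\mathbb N\}$ in the incomplete space $(0,2)$, the locally finite open cover $\{B(1/n,r_n):n\in\mathbb N\}$ with $r_n=1/n-1/(n+1)$ has no finite subcover.
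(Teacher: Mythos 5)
Your proposal is correct, and the forward implication is exactly the paper's argument: apply Theorem~\ref{TtlyBnddLclyFntCvr} to get $\alpha$, cover $S$ by finitely many $\alpha/3$-balls, and take the union of the resulting finite subfamilies. Where you genuinely diverge is the backward implication. The paper disposes of it in one line by invoking the paracompactness of the metric space $X$: given $\epsilon>0$, refine the cover by $\epsilon$-balls to a locally finite open cover, extract a finite subcover for $S$, and conclude total boundedness. You instead argue by contraposition with an explicit construction: from non-total-boundedness you extract an $\epsilon$-separated sequence $\{x_n\}\subset S$, note that $A=\{x_n\}$ is closed, and exhibit the locally finite cover $\{B(x_n,\epsilon/4)\}\cup\{X\setminus A\}$ with no finite subcover for $S$; your four-term triangle-inequality check of local finiteness and the verification that infinitely many $x_m$ stay uncovered are both sound. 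This route is more elementary and self-contained, since it avoids A.~H.~Stone's theorem that metric spaces are paracompact, and it makes transparent your (correct) observation that completeness is used only in the forward direction; it is also very much in the spirit of the paper itself, being essentially the same separated-sequence cover used in the proof of Theorem~\ref{ChnblLbsgTtlybndd} (there with radii $\epsilon/4n$). What the paper's paracompactness argument buys in exchange is brevity and the positive (non-contrapositive) form of the implication, at the cost of importing a nontrivial theorem.
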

\begin{proof} 
	Let $\mathcal O$ be a locally finite open cover of $S$. By Theorem \ref{TtlyBnddLclyFntCvr}, there is an $\alpha >0$ such that each subset $B\subset S$ with diam$(B)<\alpha$ is covered by finitely many members of $\mathcal O$. Now, by the total boundedness of $S$, for the number $\alpha/3>0$, there are finitely many points $s_1,s_2,...,s_n\in S$ such that $S=\bigcup\limits_{i=1}^n[B(s_i,\alpha/3)\cap S]$. And thus, the cover $\mathcal O$ has a finite subcover for $S$.
	
	The converse part is proved by using the paracompactness of $X$.	
\end{proof}

\begin{cor}
	Let $S$ be a nonempty subset in an Atsuji space $X$. Then, $S$ is a finitely chainable subspace if and only if each locally finite open cover of $S$ has a finite subcover for $S$.
\end{cor}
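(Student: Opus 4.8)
The plan is to reduce the corollary to Theorem~\ref{LclFntCvr-of-TtlBndd} by showing that, for a subset $S$ of an Atsuji space $X$, being a finitely chainable subspace is equivalent to being totally bounded. Since an Atsuji space is complete, Theorem~\ref{LclFntCvr-of-TtlBndd} applies to $S\subseteq X$ and identifies ``$S$ is totally bounded'' with ``every locally finite open cover of $S$ has a finite subcover for $S$'', so the corollary follows. One half of the claimed equivalence is immediate: if every locally finite open cover of $S$ has a finite subcover for $S$, then $S$ is totally bounded by Theorem~\ref{LclFntCvr-of-TtlBndd}, and a totally bounded subset of a metric space is a finitely chainable subspace.

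For the other half, suppose $S$ is a finitely chainable subspace and put $T=\closure{S}$, the closure of $S$ in $X$. First I would record two facts about $T$. (i)~$T$ is an Atsuji space: given an open cover $\mathcal U$ of $T$, write each member as $W\cap T$ with $W$ open in $X$, adjoin the open set $X\setminus T$ to get an open cover of $X$, choose a Lebesgue number $\alpha$ for it (possible as $X$ is Atsuji), and check that $\alpha$ is a Lebesgue number for $\mathcal U$, since any nonempty $A\subseteq T$ with $\mathrm{diam}(A)<\alpha$ must lie in some $W$ and hence in $W\cap T$. (ii)~$T$ is finitely chainable: given $\epsilon>0$, take base points $p_1,\dots,p_j\in S$ and a length $m$ witnessing the finite chainability of $S$ for the parameter $\epsilon/2$; any $t\in T$ is within $\epsilon/2$ of some $s\in S$, so appending $t$ to an $(\epsilon/2)$-chain in $S$ from a $p_i$ to $s$, and padding with repetitions to the common length $m+1$, produces an $\epsilon$-chain in $T$ from $p_i$ to $t$.

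The heart of the matter is the claim that \emph{an Atsuji space $Y$ which is finitely chainable is totally bounded}. Suppose not; then there are $\epsilon>0$ and an infinite $\epsilon$-separated set $\{y_n\}\subseteq Y$. No subsequence of $\{y_n\}$ is Cauchy, hence no subsequence of it converges; applying this to $\{y_n\}$ and to all of its subsequences together with the sequential characterization of Atsuji spaces (Theorem~3.8 of \cite{tj07}, used as in the proof above), no subsequence of $\{I(y_n)\}$ tends to $0$, where $I(y)=d(y,Y\setminus\{y\})$. Thus $\rho:=\liminf_n I(y_n)>0$, and after discarding finitely many terms we may assume $I(y_n)\ge\rho$ for every $n$. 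Fix $\delta$ with $0<\delta<\rho$, and by finite chainability of $Y$ obtain base points $p_1,\dots,p_j\in Y$ and a length $m$ such that every point of $Y$ is joined to some $p_i$ by a $\delta$-chain of length $m$. For a fixed $n$, in such a chain $p_i=a_0,\dots,a_m=y_n$ the last edge has length $\le\delta<\rho\le I(y_n)$, forcing $a_{m-1}=y_n$, and then downward induction forces $a_k=y_n$ for all $k$, so $p_i=y_n$. Hence $\{y_n:n\in\mathbb N\}\subseteq\{p_1,\dots,p_j\}$, contradicting that the $y_n$ are infinitely many and distinct. Applying the claim to $Y=T$, we conclude that $T$ is totally bounded, hence so is $S\subseteq T$, and Theorem~\ref{LclFntCvr-of-TtlBndd} then finishes the proof.

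I expect the main obstacle to be the core claim, and within it the step that bounds $I(y_n)$ away from $0$: this cannot be extracted directly by choosing a clever open cover of $Y$, because every open set containing $y_n$ also contains points arbitrarily near $y_n$, so it seems essential to invoke the sequential characterization of Atsuji spaces — or, alternatively, the compactness of the derived set $Y'$, via the observation that an $\epsilon$-separated set meets any fixed neighbourhood of $Y'$ in only finitely many points. The remaining ingredients (closed subspaces of Atsuji spaces are Atsuji; finite chainability of $S$ passes to $\closure{S}$; a totally bounded set is a finitely chainable subspace) are routine.
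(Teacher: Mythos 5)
Your proof is correct, but it takes a genuinely different route from the paper. The paper's proof is a one-line reduction: it cites Theorem 3.5 of \cite{sk17}, which already asserts that a subset $S$ of an Atsuji space is totally bounded if and only if $(S,d)$ is finitely chainable, and then (implicitly) invokes Theorem \ref{LclFntCvr-of-TtlBndd} (legitimate, since Atsuji spaces are complete). You instead reprove the nontrivial half of that cited equivalence from scratch: you pass to $T=\closure{S}$, check that a closed subspace of an Atsuji space is Atsuji and that finite chainability survives taking closures, and then establish the key claim that a finitely chainable Atsuji space is totally bounded, via the sequential characterization of Atsuji spaces from \cite{tj07} (an $\epsilon$-separated sequence has no convergent subsequence, so its isolation functional $I$ is bounded away from $0$ along the tail) combined with a neat chain-collapsing argument (a $\delta$-chain with $\delta<I(y_n)$ ending at $y_n$ must be constant). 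This buys a self-contained argument that exposes exactly why Atsujiness is used, at the cost of length; it also, in effect, re-derives a special case of the external theorem the paper relies on. Two small remarks: the detour through $T$ is avoidable, since you could run the same argument directly on an $\epsilon$-separated sequence in $S$ using $I_X$ and noting $d(y,S\setminus\{y\})\ge d(y,X\setminus\{y\})$, with the chains of the subspace $(S,d)$; and the line ``after discarding finitely many terms we may assume $I(y_n)\ge\rho$'' is not literally implied by $\liminf_n I(y_n)=\rho$ (consider $I(y_n)=\rho(1-1/n)$), though this is harmless because all you use is $I(y_n)>\delta$ eventually for your fixed $\delta<\rho$, which does follow.
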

\begin{proof}
	By using Theorem 3.5 in \cite{sk17}, we have: Given a subset $S$ in an Atsuji space $X$, $S$ is totally bounded if and only if $S$ is finitely chainable subspace. 
\end{proof}

Due to the paracompactness of a metric space, we get the following.
\begin{cor}
	If $S$ is a totally bounded subset in a complete metric space $X$, then each open cover of $X$ has a finite subcover for $S$. 
\end{cor}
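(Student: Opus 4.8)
The plan is to reduce the given cover to a locally finite one and then invoke Theorem~\ref{LclFntCvr-of-TtlBndd}. Let $\mathcal U$ be an arbitrary open cover of $X$. Since $X$ is a metric space, Stone's theorem guarantees that it is paracompact, so $\mathcal U$ admits a locally finite open refinement $\mathcal V$ that covers $X$. I would then observe that $\mathcal V$ is in particular a locally finite open cover of $S$: its members are open in $X$, their union contains $S$, and local finiteness at each point of $X$ a fortiori gives local finiteness at each point of $S$.

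Next I would apply Theorem~\ref{LclFntCvr-of-TtlBndd}. Since $S$ is a totally bounded subset of the complete metric space $X$, that theorem produces finitely many $V_1,\dots,V_n\in\mathcal V$ with $S\subseteq\bigcup_{i=1}^{n}V_i$. Because $\mathcal V$ refines $\mathcal U$, for each $i$ I can choose some $U_i\in\mathcal U$ with $V_i\subseteq U_i$; then $S\subseteq\bigcup_{i=1}^{n}U_i$, so $\{U_1,\dots,U_n\}$ is the required finite subfamily of $\mathcal U$ covering $S$.

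I do not expect a serious obstacle: the statement is essentially immediate once Theorem~\ref{LclFntCvr-of-TtlBndd} and the paracompactness of metric spaces are in hand, which is exactly why it appears here as a corollary. The only place that demands a little care is the bookkeeping around the transition between covers of $X$ and covers of $S$ --- checking that the refinement $\mathcal V$ genuinely qualifies as a locally finite open cover of $S$ in the sense used by Theorem~\ref{LclFntCvr-of-TtlBndd}, and that a finite subfamily of $\mathcal V$ covering $S$ can be lifted, one member at a time, to a finite subfamily of the original cover $\mathcal U$ covering $S$.
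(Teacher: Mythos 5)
Your argument is correct and is exactly the paper's intended route: the paper's corollary is attributed to the paracompactness of metric spaces, i.e.\ pass from the given cover of $X$ to a locally finite open refinement, apply Theorem~\ref{LclFntCvr-of-TtlBndd} to extract a finite subcover for $S$, and lift each member back to the original cover. Your bookkeeping about the refinement being a locally finite open cover of $S$ is fine and fills in the details the paper leaves implicit.
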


We introduce now a property of a metric space $X$ which generalizes the connectedness as well as the Menger convexity of $X$. We will study a relation between Lebesgue number and total boundedness in the spaces which are more general than connected spaces. We look at some definitions and existing results first as follows.

 A metric space $X$ is said to be \textit{chainable} if for each $\epsilon>0$ each pair of points $x,y$ can be joined by an $\epsilon$-chain. All connected metric spaces are chainable. 

A  metric space $X$ is said to be\textit{ metrically convex} if for any two distinct points $x,y\in X$ there is a $z\in X\setminus \{x,y\}$ such that $d(x,y)=d(x,z)+d(z,y)$. A metric space $X$ is said to be \textit{Menger convex} (\cite{ib05}) if for each $x,y\in X$ and each $0\leq r \leq d(x,y)$, the set $B[x,r]\cap B[y,d(x,y)-r]\neq \emptyset $. Clearly, a Menger convex metric space is always metrically convex; but the converse need not be true, for eaxmple the set of rationals $\mathbb Q$ is metrically convex but not Menger convex. 

\begin{defn}
	Let $(X,d)$ be a metric space. We say, it has the \textit{property $P$} if, for a given $x\in X$ and for all $r$ with $0\leq r< \sup\limits_{y\in X}d(x,y)$, there is a $z\in X$ such that $d(x,z)=r$.
\end{defn}

Let the class of metric spaces with the property $P$ be denoted by $\mathcal P$.  Then, $\mathcal P$ contains:
\begin{enumerate}
	\item All connected metric spaces: Given a pair of distinct points $x,y$ in a connected metric space $X$ and $0< r < d(x,y)$, the set $\{z\in X:d(x,z)=r\}$ can't be empty, otherwise the open ball $B(x,r)$ would be closed too. Singleton spaces obviously have the property $P$.
	
	\item Complete metrically convex spaces, because of Menger's Theorem \cite{am04}.
	
	\item Menger convex spaces: Given a pair of points $x,y$ in a metric space $X$ and $0\leq r\leq d(x,y)$, $B[x,r]\cap B[y,d(x,y)-r]=S[x,r]\cap S[y,d(x,y)-r]$, where $S[x,r]=\{z\in X:d(x,z)=r\}$.
	
	A Menger convex metric space need not be connected, for example, the subset $S= \{(x,y)\in \mathbb R^2:x^2+y^2\neq 1\}$ of $\mathbb R^2$ endowed with the metric $d((x_1,y_1),(x_2,y_2))=\max \{|x_1-x_2|,|y_1-y_2|\}$ is Menger convex space but not connected.
	
\end{enumerate}

 We note that a subspace in $\mathbb R^2$ defined as the union of two distinct parallel lines has the property $P$ but it is neither chainable nor metrically convex. The subspace $\mathbb Q$ of $ \mathbb R$ is metrically convex and chainable but it does not have the property $P$.
 
  M. H. A. Newman, in \cite{mh64} (Theorem 5.1, p. 81), proved that if each open cover of a chainable metric space has a finite subcover, then it is connected. By considering more general hypotheses, in the following result, we show that the space is totally bounded.
  
 \begin{thm}\label{ChnblLbsgTtlybndd}
	Let $X$ be a chainable metric space. If every countably infinite locally finite open cover of $X$ has a Lebesgue number, then $X$ is totally bounded.
\end{thm}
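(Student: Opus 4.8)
The plan is to prove the contrapositive: assuming that $X$ is chainable but not totally bounded, I will construct one countably infinite locally finite open cover of $X$ that admits no Lebesgue number. Since $X$ is not totally bounded, there are $\epsilon_0>0$ and an infinite set $\{a_n:n\in\mathbb N\}\subseteq X$ with $d(a_i,a_j)\ge\epsilon_0$ whenever $i\ne j$; after discarding finitely many indices I may assume $1/n\le\epsilon_0/2$ for every index $n$ that I use.

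The role of chainability is to attach to each $a_n$ a short ``spine'' terminating at $a_n$. For each admissible $n$, chainability provides a $(1/n)$-chain from $a_1$ to $a_n$; since $d(a_1,a_n)\ge\epsilon_0>1/n$, after deleting any trailing repetitions of $a_n$ this chain still has length at least $2$, so its penultimate point $u_n$ satisfies $0<d_n:=d(u_n,a_n)\le 1/n$. Thus the two-point set $\{u_n,a_n\}$ has diameter $d_n\to 0$, while $a_n$ remains $\epsilon_0$-separated from all the other $a_m$ and $u_n$ lies within $1/n$ of $a_n$. These ``defect pairs'' will be the obstruction to any Lebesgue number.

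I would then set $U_n:=B(a_n,d_n/2)$ for each admissible $n$ and $U_0:=X\setminus\bigcup_n\overline{B(a_n,d_n/4)}$, and take $\mathcal U:=\{U_0\}\cup\{U_n\}$. The verifications run as follows. First, $\mathcal U$ covers $X$, because $X\setminus U_0=\bigcup_n\overline{B(a_n,d_n/4)}\subseteq\bigcup_nU_n$. Second, $\mathcal U$ consists of open sets: each $U_n$ is a ball, and the family $\{\overline{B(a_n,d_n/4)}\}$ is locally finite (its centres are $\epsilon_0$-separated and its radii tend to $0$), so its union is closed and $U_0$ is open. Third, $\mathcal U$ is locally finite: by the same estimate, a ball $B(x,\epsilon_0/8)$ can meet $U_n=B(a_n,d_n/2)$ for only finitely many $n$, since for all large $n$ such a meeting would force $d(x,a_n)<\epsilon_0/4$, and at most one $a_n$ can lie that close to $x$. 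Finally, $\mathcal U$ has no Lebesgue number: for each admissible $n$ the set $\{u_n,a_n\}$, of diameter $d_n$, is contained in no member of $\mathcal U$ --- not in $U_0$, which omits $a_n$; not in $U_n$, which omits $u_n$ because $d(u_n,a_n)=d_n>d_n/2$; and not in any $U_m$ with $m\ne n$, which omits $a_n$ because $d(a_n,a_m)\ge\epsilon_0>d_m/2$. Since $d_n\to 0$, no positive number can be a Lebesgue number for $\mathcal U$, contradicting the hypothesis; hence $X$ is totally bounded.

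The only genuinely delicate point, and the only place where chainability is used, is the production of the defect pairs $\{u_n,a_n\}$ in the second step: I need two-point sets of vanishing diameter whose two points are forced into distinct members of whatever cover I build, and chainability is precisely what lets such a pair be peeled off each $\epsilon_0$-separated point $a_n$. (A uniformly discrete space, where the theorem genuinely fails, admits no such pairs: there the singletons already form a locally finite open cover with a Lebesgue number.) Everything afterwards --- closedness of the union of the small closed balls and local finiteness of $\mathcal U$ --- reduces to the elementary observation that balls of radius $o(1)$ centred at an $\epsilon_0$-separated set form a locally finite family.
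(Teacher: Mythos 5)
Your argument is correct and is essentially the paper's own strategy: from failure of total boundedness extract an $\epsilon_0$-separated sequence, surround its points by small balls (plus one complementary open set) to get a countably infinite locally finite cover, and use chainability to produce arbitrarily small sets that no single member contains. The only difference is cosmetic: you invoke chainability up front to pick witness points $u_n$ and tailor the radii $d_n/2$ to them, whereas the paper fixes radii $\epsilon/4n$ and, for each candidate Lebesgue number $\gamma$, extracts an intermediate chain point inside $B(x_p,\gamma)$.
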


\begin{proof}
	If possible, suppose $(X,d)$ is not totally bounded. Then, there is a sequence $\{x_n\}$ for which there is some $\epsilon>0$ such that $d(x_m,x_n)>\epsilon$ for all $m\neq n$. Consider an open cover $\mathcal O=\{B(x_n,\epsilon/4n):n\in \mathbb N\}\cup \{X\setminus S\}$ of $X$, where $S=\{x_n:n\in \mathbb N\}$. For each $x\in X$, the ball $B(x,\delta)$ intersects at most two members of $\mathcal O$, where $0<\delta<\epsilon/4$. Hence, $\mathcal O$ is locally finite.
	
	Now, we prove $\mathcal O$ has no Lebesgue number. Let $0<\gamma<\epsilon$. Then, there is $s$ with $0<s<\gamma$, and there is some $p\in \mathbb N$ such that $\epsilon/4p<s$. Let $t= \gamma-s$. Then, since $X$ is chainable, there are $a_0=x_p,a_1,...,a_r=x_m$ in $X$, $p\neq m$, such that $d(a_{i-1},a_i)\leq t$ for all $1\leq i\leq r$. This implies, there is some $i$ with $1\leq i\leq r-1$ such that $s\leq d(x_p,a_i)<\gamma$. And thus, there is no element in $\mathcal O$ which can contain the open ball $B(x_p,\gamma)$. Hence, the proof.
\end{proof}

 One of the main results in \cite{gb81} by G. Beer is: Given a chainable space $X$, $X$ is compact if and only if it is uniformly locally compact and uniformly chainable. Using this result we get,
 \begin{cor}\label{ChnbldAtsjCmpct}
 	For a chainable metric space $X$, the following are equivalent:
 	\begin{enumerate}
 		\item $X$ is Atsuji;
 		\item $X$ is compact;
 		\item $X$ is uniformly locally compact and uniformly chainable.
 	\end{enumerate}
 \end{cor}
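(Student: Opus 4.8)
The plan is to close the cycle $(2)\Rightarrow(1)\Rightarrow(2)\Leftrightarrow(3)$, leaning on the three ingredients already available: the Lebesgue number lemma, Theorem \ref{ChnblLbsgTtlybndd}, and Beer's characterization quoted just above the statement.

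First I would dispose of $(2)\Rightarrow(1)$: this is exactly the Lebesgue number lemma (\cite{jr11}, p.~175), since every open cover of a compact metric space has a Lebesgue number, so a compact metric space is Atsuji. Next, for $(1)\Rightarrow(2)$, I would recall the standard fact (mentioned in the introduction) that an Atsuji metric space is complete; it therefore suffices to show such an $X$ is totally bounded, because a complete totally bounded metric space is compact. To get total boundedness I would invoke Theorem \ref{ChnblLbsgTtlybndd}: since $X$ is chainable and, being Atsuji, \emph{every} open cover of $X$ has a Lebesgue number, in particular every countably infinite locally finite open cover of $X$ has one, and hence $X$ is totally bounded. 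This yields $(1)\Rightarrow(2)$. Finally, $(2)\Leftrightarrow(3)$ is immediate from Beer's theorem (\cite{gb81}): for a chainable space, compactness is equivalent to being uniformly locally compact and uniformly chainable. Combining the three implications establishes the equivalence of $(1)$, $(2)$, and $(3)$.

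There is no real obstacle here; the only point requiring slight care is the reduction in $(1)\Rightarrow(2)$, namely that it is enough to verify the hypothesis of Theorem \ref{ChnblLbsgTtlybndd} on the restricted class of countably infinite locally finite open covers — but this is automatic once one knows an Atsuji space has a Lebesgue number for \emph{all} open covers. One should also remember to cite the completeness of Atsuji spaces, so that "totally bounded" can be upgraded to "compact." I expect the written proof to be only a few lines.
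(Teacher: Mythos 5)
Your proposal is correct and follows exactly the route the paper intends: Theorem \ref{ChnblLbsgTtlybndd} plus completeness of Atsuji spaces gives $(1)\Rightarrow(2)$, the Lebesgue number lemma gives $(2)\Rightarrow(1)$, and Beer's theorem gives $(2)\Leftrightarrow(3)$. The paper leaves this argument implicit, and your write-up supplies precisely the missing details in the same spirit.
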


 By using the technique used in the proof of Theorem \ref{ChnblLbsgTtlybndd}, we get the following.
  
\begin{thm}\label{CnnctdLbsgTtlybndd}
	Let $X$ be a metric space with the property $P$. If every countably infinite locally finite open cover of $X$ has a Lebesgue number, then $X$ is totally bounded.
\end{thm}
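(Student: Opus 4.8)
The plan is to argue by contradiction, reusing almost verbatim the open cover built in the proof of Theorem~\ref{ChnblLbsgTtlybndd} and replacing its single use of chainability by property~$P$. Assume $X$ is not totally bounded. Then there are $\epsilon>0$ and a sequence $\{x_n\}$ with $d(x_m,x_n)>\epsilon$ for all $m\neq n$; set $S=\{x_n:n\in\mathbb N\}$ and let $\mathcal O=\{B(x_n,\epsilon/4n):n\in\mathbb N\}\cup\{X\setminus S\}$. Exactly as before, $\mathcal O$ is a countably infinite open cover of $X$, and it is locally finite since any ball of radius smaller than $\epsilon/4$ can meet at most one of the balls $B(x_n,\epsilon/4n)$ (their centers being $\epsilon$-separated) and possibly $X\setminus S$. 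By hypothesis $\mathcal O$ must then have a Lebesgue number, so it is enough to show it does not.

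To see this, fix any $\alpha>0$; I will exhibit a two-point set of diameter less than $\alpha$ lying in no member of $\mathcal O$. Pick $\gamma$ with $0<\gamma<\min\{\alpha,\epsilon\}$, then $s$ with $0<s<\gamma$, and then $p\in\mathbb N$ with $\epsilon/4p<s$. Because $d(x_p,x_{p+1})>\epsilon$ we have $\sup_{y\in X}d(x_p,y)>\epsilon>s$, so $0\le s<\sup_{y\in X}d(x_p,y)$ and property~$P$ (applied at the point $x_p$ with radius $s$) furnishes a point $z\in X$ with $d(x_p,z)=s$. The set $\{x_p,z\}$ has diameter $s<\gamma<\alpha$, yet it is contained in no member of $\mathcal O$: not in $X\setminus S$ since $x_p\in S$; not in $B(x_p,\epsilon/4p)$ since $d(x_p,z)=s>\epsilon/4p$; and not in $B(x_n,\epsilon/4n)$ for $n\neq p$ since $d(x_p,x_n)>\epsilon>\epsilon/4n$. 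Hence no $\alpha>0$ is a Lebesgue number for $\mathcal O$, the desired contradiction.

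The only step that differs from the chainable case is the middle one, and there property~$P$ is actually a cleaner instrument than chainability: rather than producing an $\epsilon$-chain emanating from $x_p$ and picking out an intermediate term whose distance from $x_p$ lands in the window $[s,\gamma)$, property~$P$ directly delivers a point at exactly the prescribed distance $s$. The one thing to check carefully — and this is about as close as the argument comes to having an obstacle — is that $s$ lies strictly below $\sup_{y\in X}d(x_p,y)$, which is precisely what makes property~$P$ applicable; this is forced by the $\epsilon$-separation of $\{x_n\}$. Since property~$P$ is strictly more general than chainability (two parallel lines in $\mathbb R^2$ satisfy it but are not chainable), the theorem genuinely enlarges the scope of Theorem~\ref{ChnblLbsgTtlybndd} to certain disconnected, non-chainable spaces.
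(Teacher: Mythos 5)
Your argument is correct and is precisely the technique the paper intends: you reuse the cover $\mathcal O=\{B(x_n,\epsilon/4n):n\in\mathbb N\}\cup\{X\setminus S\}$ from Theorem~\ref{ChnblLbsgTtlybndd} and simply replace the chain argument by a direct application of property $P$ at $x_p$ with radius $s$, producing the two-point set $\{x_p,z\}$ of diameter $s<\alpha$ contained in no member of $\mathcal O$. The only inaccuracy is the closing aside that property $P$ is strictly more general than chainability --- the two notions are in fact incomparable, since $\mathbb Q$ is chainable but fails property $P$ --- but this remark plays no role in the proof itself.
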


\begin{cor}\label{PropP-AtsjIsCmpct}   
	Let $X$ be a metric space with the property $P$. Then, $X$ is Atsuji if and only if it is compact.
\end{cor}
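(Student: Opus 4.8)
The plan is to prove the two implications separately, with all the real content being imported from earlier results. For the easy direction, ``compact $\Rightarrow$ Atsuji'' is precisely the classical Lebesgue number lemma recalled in Section \ref{intro}: on a compact metric space every open cover has a Lebesgue number, so $X$ is Atsuji. This direction uses nothing about property $P$ and needs no further argument.

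For the converse, suppose $X$ is Atsuji. By definition every open cover of $X$ has a Lebesgue number; in particular every countably infinite locally finite open cover of $X$ has a Lebesgue number. Since $X$ has property $P$, Theorem \ref{CnnctdLbsgTtlybndd} applies verbatim and tells us that $X$ is totally bounded. On the other hand, as noted in the introduction, every Atsuji space is complete. A metric space that is simultaneously complete and totally bounded is compact, so $X$ is compact, which finishes the proof.

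I do not expect any genuine obstacle here, since the statement is a corollary that merely repackages Theorem \ref{CnnctdLbsgTtlybndd}, the Lebesgue number lemma, and the standard fact that Atsuji spaces are complete. The only points requiring a word of care are: (i) making explicit that the Atsuji hypothesis supplies Lebesgue numbers for \emph{all} open covers, hence in particular for the countably infinite locally finite covers required by Theorem \ref{CnnctdLbsgTtlybndd}; and (ii) recording that property $P$ is exactly what licenses the appeal to that theorem, so the corollary would fail without it. With those observations the proof is a couple of lines.
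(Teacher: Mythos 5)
Your proposal is correct and follows exactly the route the paper intends: compactness gives Atsujiness via the classical Lebesgue number lemma, and Atsujiness gives Lebesgue numbers for all (in particular countably infinite locally finite) open covers, so Theorem \ref{CnnctdLbsgTtlybndd} yields total boundedness, which together with the completeness of Atsuji spaces gives compactness. No discrepancy with the paper's argument.
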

 The converses of Theorems \ref{ChnblLbsgTtlybndd}, \ref{CnnctdLbsgTtlybndd} are not true. For instance:
\begin{example}\label{(0,1].}
	Consider the subspace $X=(0,1]\subset \mathbb R$ which is connected. The space $X$ is totally bounded. Let $r_n=1/n-1/(n+1), n\in \mathbb N$. Then, the locally finite open cover $\mathcal O=\{B(1/n,r_n)\subset X:n\in \mathbb N\}$ of $X$ has no Lebesgue number.
\end{example}

  \end{document}